\newcommand{\bc}{\begin{center}}
\newcommand{\ec}{\end{center}}
\newcommand{\be}{\begin{enumerate}}
\newcommand{\ee}{\end{enumerate}}
\newcommand{\beq}{\begin{equation}}
\newcommand{\eeq}{\end{equation}}
\newcommand{\bi}{\begin{itemize}}
\newcommand{\ei}{\end{itemize}}
\newcommand{\bd}{\begin{description}}
\newcommand{\ed}{\end{description}}
\newcommand{\ba}{\begin{array}}
\newcommand{\bea}{\begin{eqnarray*}}
\newcommand{\eea}{\end{eqnarray*}}
\newcommand{\ea}{\end{array}}
\newcommand{\bt}{\begin{tabular}}
\newcommand{\et}{\end{tabular}}
\newcommand{\bmi}{\begin{minipage}}
\newcommand{\emi}{\end{minipage}}
\newtheorem{thm}{Theorem}[section]
\newtheorem{defn}[thm]{Definition}
\newtheorem{lem}[thm]{Lemma}
\newtheorem{cor}[thm]{Corollary}
\begin{document}

\bc {\bf\large On the center of a finite Dickson nearfield}\\[3mm]
{\sc Prudence Djagba  }

\it\small
African Institute for Mathematical Sciences \\
South Africa\\
\rm e-mail: prudence@aims.ac.za
\ec
 
\normalsize

\quotation{\small {\bf Abstract:}   We  study the center  of a finite Dickson nearfield that arises from  Dickson pair $(q,n)$.

\small
{\it Keywords: Center, Dickson nearfield} \\
\normalsize

\textup{2010} \textit{MSC}: \textup{16Y30;12K05}

\section{Introduction}

Nearfields were first studied by Dickson in $1905$ and were applied immediately by mathematicians to geometry. Lacking one side distributive law makes the study of nearfields difficult despite they look a lot like fields. Zassenhauss \cite{zassenhauss1935},  Dancs \cite{susans1971,susans1972}, Karzel and Ellers \cite{ellerskarzel1964} have solved some important problems in this area. Recently the author in \cite{djagba} has investigated on the generalized distributive set of a finite nearfield\\

In the paper  \cite{ellerskarzel1964}, Eller and Karzel showed that the center of a finite Dickson  that arises from  Dickson pair $(q,n)$ is equal to a finite field of order $q^n$. In the present work, we provide a simple and shortest proof of a result  due to Eller and Karzel on the presentation of the center of a finite Dickson nearfield that arises from a Dickson pair $(q,n)$.\\

Let $S$ be any group with identity  $0$. We will use $S^*$ to denote $S \setminus \{ 0\}.$

\begin{defn}(\cite{meldrum1985near}) Let  $(R,+,\cdot)$ be a triple such that $(R,+)$ is a group,
 $(R,\cdot)$ is a semigroup, and  $a \cdot (b+c)= a \cdot b+a \cdot c$ for all $a,b,c \in R.$ Then $(R,+,\cdot)$ is a (left) nearring. If in addition $ \big ( R^*, \cdot \big )$ is a group then $(R,+, \cdot)$ is called a nearfield. 
\end{defn}

So a nearfield is an algebraic structure similar to a skewfield (sometimes called a division ring) except that it has only one of the two distributive laws.  It is well known that the additive group of a (left) nearfield is abelian, see for instance \cite{zassenhauss1935,  dickson1905finite}. Throughout this paper we will make use of (left) nearfields.

Furthermore, as we know  from the definition of a (left) nearfield, we do not  necessarily have the right distributivity law and  commutativity of  multiplication. For this reason, the following concepts have been defined and they will be used in the next chapters.
\begin{defn}(\cite{pilz2011near})
Let $R$ be a nearfield. 
\begin{itemize}
\item The multiplicative  center of $ (R, \cdot)$ denoted by $C(R)$,  is defined as follows: 
\begin{align*}
C(R)= \left\lbrace x \in R : x \cdot y=y \cdot x  \thickspace  \mbox{for all}\thickspace y \in R \right\rbrace. 
\end{align*}
i.e., it is the set of elements of $R$ that commute with every element of $R$.
\item  We  use $D(R)$ to denote the set of all distributive elements of $R$, also called the kernel of $ (R,+, \cdot)$. It is defined as follows:  
\begin{align*}
D(R) = \lbrace \lambda \in R : \thickspace (\alpha+\beta) \cdot \lambda = \alpha  \cdot \lambda+\beta \cdot \lambda \thickspace \mbox{for all}\thickspace \alpha, \beta  \in R \rbrace.
\end{align*}
\end{itemize}
\label{defdiscen}
\end{defn}

 To construct finite Dickson nearfields, we need two concepts: Dickson pair and coupling map.

\begin{defn} (\cite{pilz2011near})
A pair of positive integers  $(q,n)$ is called a Dickson pair if the following conditions are satisfied:
\begin{enumerate}
\item[(i)] $q$ is some power $p^l$ of some prime $p$,
\item[(ii)] each prime divisor of $n$ divides $q-1$,
\item[(iii)] $q \equiv 3$ $ \text{mod } 4$ implies $4$ does not divide $n$.
\end{enumerate}
\end{defn}

% \begin{defn} (\cite{dickson1905finite})
%A pair of positive integers  $(q,n)$ is called a Dickson pair if the following conditions are satisfied: 
%\begin{enumerate}
%\item[(i)] $q=p^l$ for some  primes $p$,
%\item[(ii)] $\begin{cases}
%\text{for all prime $\pi :$ if $\pi $ divides $n$ then $\pi $ divides $q-1$} \\
%\text{and if $4$ divides $n$ then $4$ divides $q-1$.}
%\end{cases} $ 
%\end{enumerate}
%\end{defn}

Let $(q,n)$ is a Dickson pair  and $k \in \{1, \ldots, n \}.$ We will denote the positive integer  $\frac{q^k-1}{q-1}$ by $[k]_q$. 

 \begin{defn}(\cite{pilz2011near})
Let $R$ be a nearfield and $\textit{Aut} (R,+,\cdot ) $ the set of all automorphisms of $R$. A map $\phi: \thickspace R^* \to  \textit{Aut} (R,+,\cdot ) $  defined by $ a \mapsto \phi_a$
is called a coupling map if for all $a,b \in R^*, \thickspace \phi _a \circ \phi_b= \phi _{ \phi _a (b) \cdot a}.$
\end{defn}

Every Dickson pair $(q,n)$ gives rise to a finite Dickson nearfield. This is obtained by replacing the usual multiplication \say{$ \cdot$} in the finite field $\mathbb{F}_{q^n}$  of order $q^n$ with a new multiplication \say{$\circ$}. We shall denote the set of Dickson nearfields  arising from the Dickson pair $(q,n)$  by $DN(q,n)$ and  the  Dickson nearfield arising from the Dickson pair $(q,n)$ with generator $g$ by $DN_g(q,n)$. Furthermore in \cite{pilz2011near}   the new multiplication is constructed as follows: 

Let $g$ be such that $ \mathbb{F}_{q^n}^*= \langle g \rangle$ and $H = \langle g^n \rangle $. The quotient group is given by
 \begin{align*}
  \mathbb{F}_{q^n}^* / H & = \big \lbrace  g^{[1]_q}H, g^{[2]_q}H,\ldots, g^{[n]_q}H \big \rbrace \\
  & =\big \lbrace  H, gH,\ldots, g^{n-1}H \big \rbrace.
 \end{align*}
 The  coupling map $\phi$ is defined as
 \begin{align*}
\begin{array}{lcl}
\mathbb{F}_{q^n}^*& \to & \textit{Aut}(\mathbb{F}_{q^n},+,\cdot) \\
\alpha  & \mapsto & \phi_{\alpha}= \varphi^k(\alpha)  
 \end{array} 
  \end{align*} where  $\varphi$ is the Frobenius automorphism of $\mathbb{F}_{q^n}$ and $ k$ is a positive integer $( k \in \{1,\ldots,n \})$  such that $ \alpha \in g^{[k]_q}H  $. Let $\alpha, \beta  \in \mathbb{F}_{q^n},$ the  we have 
  \begin{align*}
\alpha \circ  \beta &= \left\{
\begin{array}{lcl}
\alpha \cdot \phi_{\alpha}(\beta) & \text{if} &  \alpha\ \neq 0  \\ 
0 & \text{if} & \alpha=0
\end{array}\right.  \\
&=
\begin{cases}
\alpha \cdot  \varphi^k (\beta) \thickspace  \thickspace \text{if $ \alpha \in g^{[k]_q}H $} \\
 0     \quad \quad \quad \quad  \text{if}  \thickspace     \alpha=0 
\end{cases} \\
  &= 
\begin{cases}
\alpha	 \cdot \beta^{q^k}   \thickspace \thickspace  \text{if $ \alpha \in g^{[k]_q} H  $} \\
0  \quad \quad \quad  \text{if}  \thickspace    \alpha=0  
\end{cases} 
\end{align*}
for $k \in \{ 1,\ldots,n \}$. Thus $DN_g(q,n):=\big ( \mathbb{F}_{q^n}, +, \circ \big )$ is the finite Dickson nearfield constructed by taking $H = \langle g^n \rangle$. By taking all Dickson pairs, all finite Dickson nearfields arise in this way \cite{dickson1905finite}.  Furthermore we deduce the following.

 \begin{lem}(\cite{wahling1987theorie})
 Let $(q,n)$ be a Dickson pair with $q=p^l$ for some prime $p$ and positive integers $l,n$. Let $g$ be a generator of $\mathbb{F}_{q^n}^*$ and  $R$  the finite nearfield constructed with $H = \big < g^n \big >.$ Then $n$ divides $[n]_q$ and $g^{[n]_q}H=H$.
 \end{lem}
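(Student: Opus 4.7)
The plan is to reduce both assertions to the single divisibility $n \mid [n]_q$. Indeed, once that is known, $g^{[n]_q} = (g^n)^{[n]_q/n}$ is a power of the generator of $H$, so $g^{[n]_q} \in H$ and hence $g^{[n]_q}H = H$. This collapses the lemma into a purely number-theoretic claim about Dickson pairs.

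To prove $n \mid [n]_q$, I would fix a prime $p$ dividing $n$ and show that $v_p([n]_q) \ge v_p(n)$, where $v_p$ denotes the $p$-adic valuation. The crucial input is condition (ii) of the Dickson pair: every such prime $p$ divides $q - 1$, so $v_p(q - 1) \ge 1$. Writing $q = 1 + (q-1)$ and expanding by the binomial theorem,
\[
[n]_q \;=\; \frac{q^n - 1}{q - 1} \;=\; \sum_{k=1}^{n} \binom{n}{k}(q-1)^{k-1}.
\]
The identity $k\binom{n}{k} = n\binom{n-1}{k-1}$ yields $v_p\!\left(\binom{n}{k}\right) \ge v_p(n) - v_p(k)$, so each summand has valuation at least $v_p(n) + (k-1) - v_p(k)$. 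The elementary estimate $v_p(k) \le k - 1$, a consequence of $p^{v_p(k)} \le k \le 2^{k-1}$, then makes every term have valuation at least $v_p(n)$, and so does their sum.

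The main obstacle I anticipate is the prime $p = 2$: the standard lifting-the-exponent formula for $v_2(q^n - 1)$ branches on the residue of $q$ modulo $4$, and one might worry that condition (iii) of the Dickson pair is needed here. The appeal of the binomial argument above is that it bypasses this dichotomy: only the inequality $v_p(q-1) \ge 1$ is used, and condition (ii) supplies this uniformly for every prime dividing $n$, including $p = 2$. Since $p$ was an arbitrary prime divisor of $n$, we conclude $n \mid [n]_q$, and the lemma follows.
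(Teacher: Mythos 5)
Your proof is correct. Note first that the paper itself does not prove this lemma: it is quoted from W\"ahling's \emph{Theorie der Fastk\"orper} with only a citation, so there is no in-paper argument to compare yours against. Your reduction of both claims to the single divisibility $n \mid [n]_q$ is exactly right (once $n \mid [n]_q$, $g^{[n]_q}=(g^n)^{[n]_q/n}\in H$ and the coset statement is immediate), and the valuation argument is sound: the expansion $[n]_q=\sum_{k=1}^{n}\binom{n}{k}(q-1)^{k-1}$, the inequality $v_p\bigl(\binom{n}{k}\bigr)\ge v_p(n)-v_p(k)$ from $k\binom{n}{k}=n\binom{n-1}{k-1}$, and the elementary bound $v_p(k)\le k-1$ together give each summand valuation at least $v_p(n)$, hence the sum as well; since this covers every prime divisor of $n$ (the only ones where $v_p(n)>0$), the divisibility follows. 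Your observation that only condition (ii) of the Dickson pair is needed, and that condition (iii) plays no role in this particular divisibility, is also correct --- condition (iii) is needed elsewhere in the Dickson construction, not here. One cosmetic point: you reuse the letter $p$ both for the characteristic (the paper fixes $q=p^l$) and for an arbitrary prime divisor of $n$; these need not coincide (indeed a prime dividing $n$ divides $q-1$, so it cannot be the characteristic), so you should rename one of them to avoid a clash with the paper's notation.
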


\section{Alternative proofs}

 In $1964$ Ellers and Karzel  showed that $C(R) = D(R) \cong \mathbb{F}_q$ where $R$ is a finite Dickson nearfield that arises from the Dickson pair $(q,n)$. Note that $C(R)$ denote the center  and $D(R)$ the set of all distributive elements of $R$. In this section we give an alternative   proof of the fact that $C(R) =\mathbb{F}_q$. \\

The following is well known.
\begin{thm}(\cite{lidl1994introduction})
If $K \subseteq  F$ is a field extension (i.e., $K$ is subfield of $F$), then $F$ is a vector space over $K.$
\end{thm}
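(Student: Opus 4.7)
The plan is to verify directly that $F$, equipped with its own addition and with scalar multiplication given by the restriction of its multiplication to $K \times F$, satisfies the eight vector space axioms over $K$. Since $F$ is a field, $(F,+)$ is already an abelian group, so the four additive axioms are immediate. The nontrivial content is to check the axioms that couple the two operations, and these will all be consequences of the field axioms of $F$ together with the fact that $K$ is a subfield.

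First I would define the scalar action by $k \cdot v := k v$, where the product on the right is the multiplication of $F$; this is well-defined as a map $K \times F \to F$ because $K \subseteq F$. Then I would verify the remaining four axioms in order: compatibility $(k_1 k_2) \cdot v = k_1 \cdot (k_2 \cdot v)$ from associativity of multiplication in $F$; distributivity $k \cdot (v_1 + v_2) = k \cdot v_1 + k \cdot v_2$ from left distributivity in $F$; distributivity $(k_1 + k_2) \cdot v = k_1 \cdot v + k_2 \cdot v$ from right distributivity in $F$; and the unit axiom $1_K \cdot v = v$, which reduces to checking that the multiplicative identity of $K$ coincides with that of $F$.

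The only step that warrants a brief word is the last: one shows $1_K = 1_F$ by noting that $1_K \cdot 1_K = 1_K$ in $K$, hence also in $F$, so $1_K$ is a nonzero idempotent of the field $F$, and the only such element is $1_F$. There is no real obstacle here since every identity to be checked is inherited verbatim from the field structure of $F$; the role of $K$ being a subfield (rather than a mere subset) is exactly to guarantee closure of $K$ under its operations and the identification $1_K = 1_F$ used above.
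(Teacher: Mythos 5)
Your proof is correct: the paper states this result without proof, citing it as standard from Lidl--Niederreiter, and your direct verification of the vector space axioms (with scalar multiplication inherited from the multiplication of $F$) is exactly the canonical argument. The one point you rightly single out, that $1_K = 1_F$ because $1_K$ is a nonzero idempotent in the field $F$, is the only place where the subfield hypothesis does real work, and you handle it correctly.
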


\begin{cor}(\cite{lidl1994introduction})
If $K \subseteq F$ is a field extension and $F$ is finite, then $ \vert F \vert = \vert K \vert ^n$ for some $n \in \mathbb{R}.$
\label{c1}
\end{cor}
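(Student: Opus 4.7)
The plan is to apply the preceding theorem directly: since $K \subseteq F$ is a field extension, $F$ carries the structure of a vector space over $K$. First I would note that because $F$ is finite, it is in particular finite-dimensional as a $K$-vector space, so I can set $n := \dim_K F$ and choose a $K$-basis $\{v_1, \ldots, v_n\}$ of $F$.

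Next I would use the defining property of a basis: every element $x \in F$ admits a unique expression $x = \lambda_1 v_1 + \cdots + \lambda_n v_n$ with $\lambda_i \in K$. This produces a bijection
\[
\Phi : K^n \longrightarrow F, \qquad (\lambda_1, \ldots, \lambda_n) \longmapsto \sum_{i=1}^n \lambda_i v_i,
\]
whose injectivity is uniqueness of coordinates and whose surjectivity is spanning. Counting the two sides then gives $|F| = |K^n| = |K|^n$, which is the desired identity.

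The only subtle point, and really the sole place where care is needed, is to confirm that $n$ is a well-defined positive integer (despite the statement loosely writing $n \in \mathbb{R}$). Finiteness of $F$ rules out an infinite basis, so $n < \infty$; the inclusion $K \subseteq F$ forces $|K| \geq 2$ (as $0, 1 \in K$), so the relation $|F| = |K|^n$ determines $n$ uniquely as a nonnegative integer, and $n \geq 1$ whenever $F$ is nontrivial. I do not anticipate any genuine obstacle, since the result is essentially the counting consequence of the vector-space structure supplied by the previous theorem.
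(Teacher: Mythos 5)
Your proof is correct and follows exactly the route the paper intends: the corollary is stated as an immediate consequence of the preceding theorem (the vector-space structure of $F$ over $K$), which the paper cites from Lidl--Niederreiter without giving its own argument, and your basis-counting bijection $K^n \to F$ is the standard way to fill that in. Your remark that $n$ should really be a positive integer (the statement's ``$n \in \mathbb{R}$'' is evidently a typo for $n \in \mathbb{N}$) is a correct and worthwhile observation.
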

%\begin{proof}
%We use the fact that a finite-dimensional vector space over $K$ satisfies $F \cong K^n$ for some $n \in \mathbb{N}  $ (the  dimension of $F$ over $K$.)
%\end{proof}
\begin{lem}(\cite{lidl1994introduction})
A polynomial equation of degree $n$ has at most $n$ roots over any field.
\end{lem}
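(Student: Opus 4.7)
The plan is to proceed by induction on the degree $n$. Let $K$ be a field and let $f(x) \in K[x]$ be a nonzero polynomial of degree $n$; I want to show that $f$ has at most $n$ roots in $K$. For the base case $n = 0$, any such $f$ is a nonzero constant and has no roots, so the bound holds trivially.

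For the inductive step, I would suppose the result is known for all degrees strictly less than $n$, and take $f$ of degree $n \geq 1$. If $f$ has no roots in $K$ we are done, so pick a root $\alpha \in K$. The key tool is the factor theorem: applying the division algorithm in $K[x]$ (available because $K$ is a field, so $K[x]$ is a Euclidean domain) to divide $f(x)$ by the monic linear polynomial $x - \alpha$ yields $f(x) = (x - \alpha)\,q(x) + r$ with $r \in K$. Evaluating at $x = \alpha$ forces $r = 0$, so $f(x) = (x - \alpha)\,q(x)$ with $\deg q = n - 1$.

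Now for any root $\beta \in K$ of $f$ we have $0 = f(\beta) = (\beta - \alpha)\,q(\beta)$. Since $K$ is a field it has no zero divisors, so either $\beta = \alpha$ or $q(\beta) = 0$. By the inductive hypothesis, $q$ has at most $n - 1$ roots in $K$; together with $\alpha$ this gives at most $n$ distinct roots of $f$, closing the induction.

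The only ingredient that needs to be in place beforehand is the division algorithm for $K[x]$ by a monic linear divisor, which is a routine verification. The crucial use of the field axioms is the absence of zero divisors, which lets one split $(\beta - \alpha)\,q(\beta) = 0$ into the two cases $\beta = \alpha$ or $q(\beta) = 0$; without this the argument breaks down (for example, over $\mathbb{Z}/6\mathbb{Z}$ the polynomial $x^2 - x$ has four roots). Thus there is no serious obstacle, only the reliance on this standard fact about fields.
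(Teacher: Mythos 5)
Your proof is correct: it is the standard induction via the factor theorem, using the division algorithm in $K[x]$ and the absence of zero divisors in a field, and your remark about $x^2 - x$ over $\mathbb{Z}/6\mathbb{Z}$ correctly identifies where the field hypothesis is essential. The paper itself gives no proof of this lemma---it simply cites it from Lidl and Niederreiter---so there is nothing to compare against; your argument is exactly the textbook one that the citation points to.
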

%\begin{proof}
%We use induction and factor out linear terms. Let $p \in F[X], deg(p)=n.$ For $n=1,$ we have $p=aX+b$ where $a \neq 0$ and has exactly one solution $x_0= \frac{-b}{a}.$  Suppose $p \in F[X], deg(p)=n>1$ such that $p(x_0)=0$. Then there exists $Q \in F[X]$ with degree at most $n-1$ such that $p=(X-x_0)Q.$ One has $p(X)=a_nX^n+ \cdots+a_1X +a_0.$ Since $p(x_0)=0$ then 
%\begin{align*}
%p(X) &= p(X)-p(x_0) \\
%&=a_n(X^n-x_0^n)+ \ldots+ a_1(X-x_0) \\
%&= (X-x_0) \big ( a_n(X^{n-1}+ \ldots+x_0^{n-1}) +  \ldots + a_1 \big ).
%\end{align*}
%Then $p$ has $n-1+1=n$ roots.
%\end{proof}

 Furthermore,
\begin{lem} (\cite{lidl1994introduction})
The set of elements fixed by a field automorphism is a field.
\end{lem}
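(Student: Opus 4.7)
The plan is to verify that $\mathrm{Fix}(\sigma) := \{x \in F : \sigma(x) = x\}$ satisfies the subfield criterion for $F$, so that being a subfield of a field makes it itself a field. This is a standard routine verification rather than a deep theorem, and the approach is to check closure under each of the field operations in turn, relying only on the fact that a field automorphism is a ring homomorphism that is also bijective (hence preserves inverses).

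First I would establish that $0, 1 \in \mathrm{Fix}(\sigma)$, which follows immediately from the fact that any ring homomorphism between fields (in particular any automorphism) sends $0 \mapsto 0$ and $1 \mapsto 1$. Next, for any $a, b \in \mathrm{Fix}(\sigma)$, I would compute
\begin{align*}
\sigma(a+b) &= \sigma(a) + \sigma(b) = a+b, \\
\sigma(a \cdot b) &= \sigma(a) \cdot \sigma(b) = a \cdot b,
\end{align*}
showing closure under addition and multiplication. Closure under additive inverses comes from $\sigma(-a) = -\sigma(a) = -a$, which in turn follows from $\sigma(0) = \sigma(a + (-a)) = \sigma(a) + \sigma(-a)$.

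The one mildly non-trivial step is closure under multiplicative inverses: for $a \in \mathrm{Fix}(\sigma)$ with $a \neq 0$, I would apply $\sigma$ to $a \cdot a^{-1} = 1$ to obtain $\sigma(a) \cdot \sigma(a^{-1}) = 1$, so $\sigma(a^{-1}) = \sigma(a)^{-1} = a^{-1}$, placing $a^{-1}$ in $\mathrm{Fix}(\sigma)$. Together these show $\mathrm{Fix}(\sigma)$ is a subfield of $F$, hence a field in its own right under the inherited operations.

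There is really no main obstacle here since every step reduces to the homomorphism property of $\sigma$; the only place one must be slightly careful is the inverse step, where one uses that $\sigma$, being an automorphism, is defined on every nonzero element and respects the relation $a a^{-1} = 1$. The commutative and associative laws are inherited automatically from $F$, so no further verification is needed.
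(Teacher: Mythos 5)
Your verification is correct and complete: checking that $\mathrm{Fix}(\sigma)$ contains $0$ and $1$ and is closed under addition, multiplication, additive inverses, and multiplicative inverses is exactly the standard argument, and each step follows from the homomorphism property as you say. The paper itself gives no proof of this lemma (it is cited directly from Lidl and Niederreiter), so there is nothing to compare against; your subfield-criterion argument is the canonical one.
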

\begin{lem}(\cite{lidl1994introduction})
The subfields of $\mathbb{F}_{p^m}$ are precisely those $\mathbb{F}_{p^t}$ where $t \mid m$ and they  are unique. Furthermore, they are the fields fixed by the automorphism $ \psi ^t : x \mapsto x^{p^t}.$  
\label{lm3}
\end{lem}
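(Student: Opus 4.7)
The plan is to prove the three assertions of the lemma in sequence: that any subfield of $\mathbb{F}_{p^m}$ must have order $p^t$ with $t \mid m$, that for each such $t$ a subfield of that order exists and is unique, and that it coincides with the fixed field of $\psi^t$.

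\textbf{Step 1 (necessity of $t \mid m$).} Let $K \subseteq \mathbb{F}_{p^m}$ be a subfield. Since $K$ contains $1$, it contains the prime field $\mathbb{F}_p$, so $\mathbb{F}_p \subseteq K \subseteq \mathbb{F}_{p^m}$. Applying Corollary~\ref{c1} first to $\mathbb{F}_p \subseteq K$ yields $|K| = p^t$ for some integer $t \geq 1$, and then to $K \subseteq \mathbb{F}_{p^m}$ yields $p^m = |K|^d = p^{td}$ for some $d$, forcing $t \mid m$.

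\textbf{Step 2 (existence and fixed-field description).} Fix $t$ with $t \mid m$ and let $F_t = \{x \in \mathbb{F}_{p^m} : \psi^t(x) = x\}$. Since $\psi^t$ is a composition of the Frobenius automorphism with itself, it is a field automorphism, so by the preceding lemma $F_t$ is a field. I still need $|F_t| = p^t$. Because $F_t$ is precisely the set of roots of $x^{p^t} - x$ in $\mathbb{F}_{p^m}$, the root-counting lemma gives $|F_t| \leq p^t$. For the reverse inequality, $t \mid m$ implies $p^t - 1 \mid p^m - 1$ (a standard elementary fact), hence in $\mathbb{F}_p[x]$ one has $x^{p^t - 1} - 1 \mid x^{p^m - 1} - 1$, and multiplying by $x$ gives $x^{p^t} - x \mid x^{p^m} - x$. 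Since every element of $\mathbb{F}_{p^m}$ is a root of $x^{p^m} - x$ (by Lagrange applied to $\mathbb{F}_{p^m}^*$, together with $0$), the polynomial $x^{p^m} - x$ splits completely in $\mathbb{F}_{p^m}$, and therefore so does $x^{p^t} - x$. Thus $|F_t| = p^t$.

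\textbf{Step 3 (uniqueness).} Suppose $K \subseteq \mathbb{F}_{p^m}$ is any subfield of order $p^t$. Every $x \in K^*$ satisfies $x^{p^t - 1} = 1$ by Lagrange in $K^*$, so every $x \in K$ satisfies $x^{p^t} = x$, i.e.\ $K \subseteq F_t$. Combined with $|K| = p^t = |F_t|$, this gives $K = F_t$. This simultaneously delivers uniqueness and identifies the unique subfield of order $p^t$ with the fixed field of $\psi^t$.

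The main obstacle is the splitting argument in Step 2: one must convert the numerical divisibility $t \mid m$ into the polynomial divisibility $(x^{p^t} - x) \mid (x^{p^m} - x)$ and then invoke the fact that $x^{p^m} - x$ splits completely over $\mathbb{F}_{p^m}$. Once that link is in place, the rest is essentially bookkeeping with the lemmas already stated earlier in the excerpt.
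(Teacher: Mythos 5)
Your proof is correct and complete: Step 1 correctly uses Corollary~\ref{c1} twice to force $t \mid m$, Step 2's passage from $t \mid m$ to $(x^{p^t}-x) \mid (x^{p^m}-x)$ and the complete splitting of $x^{p^m}-x$ over $\mathbb{F}_{p^m}$ is the right way to get $|F_t| = p^t$, and Step 3's Lagrange argument settles uniqueness and the fixed-field identification at once. The paper does not prove this lemma at all --- it simply cites Lidl--Niederreiter --- and what you have written is precisely the standard argument from that source, so there is nothing to compare beyond noting that your proof correctly fills in the citation.
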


% \begin{proof}
%Suppose $K \subseteq \mathbb{F}_{p^m}$ is a subfield. Then the prime field $\mathbb{P}$ of $\mathbb{F}_{p^m}$ is a subfield of $K$ and  $\mathbb{P} \cong \mathbb{F}_{p}, $ we have $ \mid \mathbb{P} \mid =p, $ hence $ \mid K \mid =p^t$ for some $t \in \mathbb{R}$ by Corollary \ref{c1}.
%
%Since $ \mathbb{F}_{p^m}$ is a vector space over $K,$ we have $ \mid \mathbb{F}_{p^m} \mid = p^m = \mid K \mid ^r = p^{tr}$ for some $r \in \mathbb{R},$ i.e., $ t \mid m. $
%
%For uniqueness notice that any $x \in K$ satisfy $x^{\mid K \mid}= x,$ i.e., $x^{p^t}-x=0$ ( in $K$, but also in $ \mathbb{F}_{p^m}$ ). Hence any subfield of size $p^t$ coincides with the set of solutions to $x^{p^t}-x=0$ 
%\end{proof}

 We now deduce the following:
\begin{thm}Let $(q,n)$ be a Dickson pair with $q=p^l$ for some prime $p$ and positive integers $l,n$. Let $g$ be a generator of $\mathbb{F}_{q^n}^*$ and  $R$  the finite nearfield constructed with $H = \big < g^n \big >.$ Let $\mathbb{F}_{q}$ be the unique subfield of order $q$ of $\mathbb{F}_{q^n}$. Then 
\begin{align*}
\mathbb{F}_{q} \subseteq C  (  R  ).
\end{align*}
\label{m1}
\end{thm}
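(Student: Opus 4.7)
The plan is to show that every $\alpha\in\mathbb{F}_q$ commutes under $\circ$ with every $\beta\in R=\mathbb{F}_{q^n}$, by locating $\alpha$ in the correct coset of $H$ and then reading off both products from the explicit formula $\gamma\circ\delta=\gamma\cdot\delta^{q^k}$.

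First I would handle the trivial cases: if $\alpha=0$ or $\beta=0$ then both $\alpha\circ\beta$ and $\beta\circ\alpha$ vanish by definition, so it suffices to treat $\alpha\in\mathbb{F}_q^*$ and $\beta\in\mathbb{F}_{q^n}^*$. The key preliminary step is to prove the inclusion $\mathbb{F}_q^*\subseteq H$. For this I would invoke the preceding lemma, which gives $g^{[n]_q}H=H$, i.e.\ $g^{[n]_q}\in H$. Since $g$ generates $\mathbb{F}_{q^n}^*$, the element $g^{[n]_q}$ has order $(q^n-1)/[n]_q=q-1$, so $\langle g^{[n]_q}\rangle$ is a subgroup of order $q-1$ inside the cyclic group $\mathbb{F}_{q^n}^*$. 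By uniqueness of subgroups of a given order in a cyclic group (equivalently, by Lemma \ref{lm3} applied to subfields), this subgroup is exactly $\mathbb{F}_q^*$. Hence $\mathbb{F}_q^*=\langle g^{[n]_q}\rangle\subseteq H$, and in the indexing $H=g^{[n]_q}H$ of the cosets the element $\alpha$ corresponds to $k=n$.

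With $\alpha$ located in the coset $g^{[n]_q}H$, the formula gives
\begin{equation*}
\alpha\circ\beta=\alpha\cdot\beta^{q^n}=\alpha\cdot\beta,
\end{equation*}
since $\beta\in\mathbb{F}_{q^n}$ is fixed by the $n$-th power of the Frobenius. For the reverse product, if $\beta\in g^{[k]_q}H$ for some $k\in\{1,\ldots,n\}$, then
\begin{equation*}
\beta\circ\alpha=\beta\cdot\alpha^{q^k}=\beta\cdot\alpha,
\end{equation*}
because $\alpha\in\mathbb{F}_q$ is fixed by $\varphi$ and hence by every power $\varphi^k$. Combining the two computations yields $\alpha\circ\beta=\beta\circ\alpha$, which is exactly the condition $\alpha\in C(R)$.

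The only delicate step is the coset identification $\mathbb{F}_q^*\subseteq H$; everything else is a direct substitution into the definition of $\circ$. That step, in turn, reduces to the already-cited divisibility $n\mid[n]_q$ together with the standard fact that $\mathbb{F}_{q^n}^*$ has a unique subgroup of order $q-1$. Once this is in hand, the chain $\alpha\circ\beta=\alpha\beta=\beta\circ\alpha$ gives $\mathbb{F}_q\subseteq C(R)$ with no further obstacles.
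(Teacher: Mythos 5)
Your proposal is correct and follows essentially the same route as the paper: identify $\mathbb{F}_q^*$ with $\langle g^{[n]_q}\rangle$, deduce $\mathbb{F}_q^*\subseteq H$ so that $\phi_\alpha=\mathrm{id}$, and then check both products directly using that $\alpha$ is fixed by every power of the Frobenius. The only cosmetic difference is that you obtain the inclusion $\langle g^{[n]_q}\rangle\subseteq H$ from the lemma's identity $g^{[n]_q}H=H$ together with the order count $(q^n-1)/[n]_q=q-1$, whereas the paper writes an arbitrary $x\in\mathbb{F}_q^*$ as $g^l$ and derives $[n]_q\mid l$ before invoking $n\mid [n]_q$; these are interchangeable.
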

\begin{proof}
By Lemma \ref{lm3}, $\mathbb{F}_{q}$ is the solution set to the equation $x^q-x=0$ in $\mathbb{F}_{q^n}.$ Let $g$ be a generator of $ \mathbb{F}_{q^n}^{*}$ and take $x \in \mathbb{F}_{q}^*$ and write $x=g^l$. Since $x \in \mathbb{F}_{q}, \thickspace$   $ x^q=x,$ i.e., $x^{q-1}=1.$ Then $ \big ( g^l \big ) ^{q-1}= 1,$ i.e., $g^{l(q-1)}=1.$ Thus $|g|=q^n-1$ divides $l(q-1),$ i.e., $[n]_q  \mid l.$ Thus $\mathbb{F}_{q}^{*} = \big <  g^{ [n]_q} \big >$. Since $ n \mid [n]_q$ then $\big <  g^ {[n]_q} \big >$ is a subset of $\big <  g^n \big >$. Thus we have $\mathbb{F}_{q}^{*} \subseteq  H$. Furthermore for $x \in \mathbb{F}_{q}^*, \thickspace $  $x \in H=g^{[n]_q}H$. So by the Dickson construction, $ \phi_x (y)= \varphi^n(y)=y^{q^n}=y,$ hence $\phi_x =id.$  Take any $t \in R$. We have 
\begin{align*}
x \circ t= x \cdot \phi_x(t)=x \cdot t.
\end{align*}
 Moreover, since $x \in \mathbb{F}_q$ then $\varphi(x)=x^q=x.$ Thus $ \varphi ^l(x)=x$ and 
\begin{align*}
t \circ x = t \cdot \phi_t(x)= t \cdot \varphi^l(x)=t \cdot x =x \cdot t.
\end{align*}

Therefore $t \circ x = x \circ t$ for all $t \in R$. So $x \in C(R)$.
\end{proof}
In fact, it is well known in field theory that:
 \begin{thm}(\cite{lidl1994introduction}) Let $F$ be a finite field of order $p^n$ with characteristic $p$ where $p$ is prime. We have $ (a+b)^{p^m}=a^{p^m} + b^{p^m} $ for all $a,b \in F $ and $m \in \mathbb{N}.$
 \label{t:4444}
\end{thm}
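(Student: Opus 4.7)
The plan is to prove this by induction on $m$, with the key content living entirely in the base case $m=1$; the inductive step is then a purely formal iteration.

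For the base case, I would apply the usual binomial theorem (valid in any commutative ring, and $F$ is commutative as it is a field) to write
\[
(a+b)^p = \sum_{k=0}^{p} \binom{p}{k}\, a^k b^{p-k}.
\]
The crux is the arithmetic fact that $p \mid \binom{p}{k}$ for every $1 \le k \le p-1$. This follows from $\binom{p}{k} = \tfrac{p!}{k!(p-k)!}$: the numerator carries a factor of $p$, while for $1 \le k \le p-1$ neither $k!$ nor $(p-k)!$ does, since $p$ is prime and both $k$ and $p-k$ are strictly less than $p$. Because $F$ has characteristic $p$, each such middle term vanishes, leaving only the $k=0$ and $k=p$ terms, which give $(a+b)^p = a^p + b^p$.

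For the inductive step, assuming the identity for exponent $p^m$, I would compute
\[
(a+b)^{p^{m+1}} = \bigl((a+b)^{p^m}\bigr)^p = \bigl(a^{p^m} + b^{p^m}\bigr)^p = \bigl(a^{p^m}\bigr)^p + \bigl(b^{p^m}\bigr)^p = a^{p^{m+1}} + b^{p^{m+1}},
\]
where the inductive hypothesis is used at the second equality and the base case (applied to the elements $a^{p^m}, b^{p^m} \in F$) at the third. The case $m=0$ is trivial since $(a+b)^1 = a+b$.

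There is no real obstacle here; this is a standard \emph{Freshman's dream} argument. The only subtlety worth flagging is that the vanishing of $\binom{p}{k}$ in $F$ depends on two ingredients working together: that $p$ is prime (so that $p \nmid k!(p-k)!$) and that the characteristic of $F$ is exactly $p$ (so that integer multiples of $p$ are zero in $F$). Both are part of the hypothesis, so the proof goes through cleanly.
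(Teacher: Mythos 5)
Your proof is correct and complete: the base case via the binomial theorem together with the divisibility $p \mid \binom{p}{k}$ for $1 \le k \le p-1$, followed by induction on $m$, is the canonical \emph{Freshman's dream} argument. The paper itself offers no proof of this statement --- it is quoted as a known fact from Lidl and Niederreiter --- so there is nothing to compare against, but your argument is exactly the standard one that the cited reference gives, and you correctly isolate the two hypotheses (primality of $p$ and characteristic exactly $p$) that make the middle binomial terms vanish.
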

We have the following.
\begin{lem}Let $(q,n)$ be a Dickson pair with $q=p^l$ for some prime $p$ and positive integers $l,n$. Let $g$ be a generator of $\mathbb{F}_{q^n}^*$. Then
$\mathbb{F}_{p}  \big < g^n \big > = \mathbb{F}_{q^n}$ where $\mathbb{F}_{p}$ is the unique subfield of $\mathbb{F}_{q^n}$ of order $p$.
\label{l4444} 
\end{lem}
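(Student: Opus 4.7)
The plan is to interpret $\mathbb{F}_p\langle g^n\rangle$ as the subring of $\mathbb{F}_{q^n}$ generated by the prime field $\mathbb{F}_p$ together with the cyclic subgroup $H=\langle g^n\rangle$---equivalently the subfield, since $\mathbb{F}_{q^n}$ is finite. Call it $L$. Rather than locating $L$ directly in the subfield lattice above $\mathbb{F}_p$, I would pass to the subfield lattice above $\mathbb{F}_q$; this collapses what would otherwise be a two-case analysis (primes $r\mid n$ versus primes $r\mid l$ with $r\nmid n$) into a single case.

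First I would observe that $\mathbb{F}_q\subseteq L$. The proof of Theorem~\ref{m1} already gives $\mathbb{F}_q^{*}=\langle g^{[n]_q}\rangle$, and combined with the divisibility $n\mid [n]_q$ stated at the end of Section~1, this yields $g^{[n]_q}\in\langle g^n\rangle=H\subseteq L$. By Lemma~\ref{lm3} we then have $L=\mathbb{F}_{q^s}$ for some divisor $s$ of $n$, and the goal reduces to proving $s=n$.

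Next I would translate membership $g^n\in L=\mathbb{F}_{q^s}$ into an order condition: $g^n$ is fixed by $x\mapsto x^{q^s}$ iff its multiplicative order $(q^n-1)/n$ divides $q^s-1$ (using $n\mid q^n-1$ from the lemma at the end of Section~1). So $s$ is the smallest divisor of $n$ with $m:=(q^n-1)/n$ dividing $q^s-1$, and it suffices to rule out $s=n/r$ for every prime $r\mid n$. From the factorisation
\begin{equation*}
q^n-1 \;=\; (q^{n/r}-1)\,\Phi,\qquad \Phi \;=\; q^{n(r-1)/r}+q^{n(r-2)/r}+\cdots+q^{n/r}+1,
\end{equation*}
we get $m=(q^{n/r}-1)\Phi/n$, so $m\mid q^{n/r}-1$ would force $\Phi\le n$. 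The Dickson pair condition gives $r\mid q-1$ (because $r$ is a prime divisor of $n$), hence $q\ge r+1$, and a binomial expansion gives $q^{n/r}\ge (r+1)^{n/r}\ge 1+n$. Therefore $\Phi\ge 1+q^{n/r}\ge n+2$, a contradiction.

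The main obstacle is precisely this final inequality $\Phi>n$: it is where the Dickson pair hypothesis enters decisively (via $q\ge r+1$), and without the reduction to the $\mathbb{F}_q$-lattice in the first step one would also need a parallel argument for primes $r\mid l$ with $r\nmid n$, carried out by setting $q':=p^{l/r}\ge 2$ and applying the analogous cofactor bound for $(q')^n-1$ inside $q^n-1$.
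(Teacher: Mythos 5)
Your proof is correct, but it takes a genuinely different route from the paper's. The paper stays at the level of the prime field: it lets $f$ be the minimal positive integer with $(g^n)^{p^f}=g^n$, identifies $\mathbb{F}_p\langle g^n\rangle=\mathbb{F}_{p^f}$ with $f\mid ln$, derives $p^{ln}-1\mid n(p^f-1)$ from $|g|=q^n-1$, and kills the case $f\le ln/2$ by the crude size comparison $p^{ln/2}+1\le n$ versus $2^{n/2}+1>n$; notably, that argument never invokes condition (ii) of the Dickson pair definition. You instead first place $\mathbb{F}_q$ inside $L$ (so that $L=\mathbb{F}_{q^s}$ with $s\mid n$ by Lemma \ref{lm3}), compute the exact multiplicative order $(q^n-1)/n$ of $g^n$, and rule out $s\mid n/r$ by bounding the cofactor $\Phi$. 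Each step checks out: $\mathbb{F}_q^*=\langle g^{[n]_q}\rangle\subseteq H$ as in Theorem \ref{m1}, the order computation correctly uses $n\mid[n]_q\mid q^n-1$, the reduction to maximal proper divisors $n/r$ is valid, $m\mid q^{n/r}-1$ does force $\Phi\le n$, and Bernoulli gives $(r+1)^{n/r}\ge 1+n$. One small caveat on your closing remark: the Dickson hypothesis is convenient here but not truly decisive, since $\Phi\ge q^{n(r-1)/r}+1\ge 2^{n/2}+1>n$ already without $r\mid q-1$. Overall, your approach buys a sharper structural picture (the exact order of $g^n$ and its location in the $\mathbb{F}_q$-subfield lattice) at the cost of leaning on the Dickson-pair facts $n\mid[n]_q$ and $r\mid q-1$, whereas the paper's blunter counting argument needs only that $q$ is a prime power.
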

\begin{proof}
 Let $f$ be the smallest positive integer such that $\big ( g^n \big ) ^{p^f}=g^n.$ Then $g^n$ is a solution to the equation $x^{p^f}-x=0$. In fact every $x$ in $\mathbb{F}_{p} \big < g^n \big > $ satisfies $x^{p^f}-x=0.$ We have  $x= \sum _{i \in I} a_ig^{nb_i},$  where $ a_i \in \mathbb{F}_{p}$ and $b_i \in \mathbb{Z}$. Then 
\begin{align*}
x^{p^f} & =  \big (  \sum _{i \in I} a_ig^{nb_i} \big ) ^{p^f} \\
&= \sum_{i \in I} (a_ig^{nb_i})^{p^f}  \quad \mbox{by Theorem} \thickspace  \ref{t:4444} \ \\
&= \sum_{i \in I} a_i^{p^f} \big ((g^{n})^{p^f} \big )^{b_i} \\
&= \sum_{i \in I} a_ig^{nb_i} \\
&=x.
\end{align*} 
Thus  $ \mathbb{F}_{p}\big < g^n \big >  \subseteq  \mathbb{F}_{p^f}$. But note that since $f$ is minimal,   $ \mathbb{F}_{p}\big < g^n \big >  = \mathbb{F}_{p^f}$.

Furthermore, since $(g^n)^{p^f}=g^n,$ we have,
\begin{align*}
(g^n)^{p^f-1}=1 \Leftrightarrow g^{n(p^f-1)}=1,
\end{align*}
hence 
\begin{align*}
|g|= q^n-1 = p^{ln}-1  \mid  n(p^f-1).
\end{align*}
Since $\mathbb{F}_{p}(g^n) = \mathbb{F}_{p^f} \subseteq \mathbb{F}_{p^{ln}}, \thickspace$   $f \mid ln.$ Suppose that $f \neq ln,$ then $f \leq \frac{ln}{2}$ and  
\begin{align*}
p^{ln}-1 \mid n(p^f-1) \Rightarrow p^{ln}-1 \leq n(p^f-1)\leq n( p^{\frac{ln}{2}}-1).
\end{align*}
Dividing by $p^{\frac{ln}{2}}-1,$ we get
\begin{align*}
p^{\frac{ln}{2}}+1 \leq n,
\end{align*}
 but
\begin{align*}
p^{\frac{ln}{2}}+1 \geq 2^{\frac{ln}{2}}+1 \geq 2^{\frac{n}{2}}+1>n.
\end{align*} 
This leads to a contradiction. Thus $f=ln,$ so $ \mathbb{F}_{p}\big < g^n \big >  = \mathbb{F}_{q^n}.$
\end{proof}
\begin{thm}
Let $(q,n)$ be a Dickson pair with $q=p^l$ for some prime $p$ and positive integers $l,n$. Let $g$ be a generator of $\mathbb{F}_{q^n}^*$ and  $R$  the finite nearfield constructed with $H = \big < g^n \big >.$ Let $\mathbb{F}_{q}$ be the unique subfield of order $q$ of $\mathbb{F}_{q^n}$. Then 
\begin{align*}
 C  (  R ) \subseteq \mathbb{F}_{q} .
\end{align*}
\label{m2}
\end{thm}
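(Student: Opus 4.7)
The strategy is to use Lemma \ref{l4444} to force the automorphism $\phi_x$ associated to a nonzero central element $x$ to be the identity, after which the commutativity condition collapses directly to $x \in \mathbb{F}_q$. Pick $x \in C(R)^*$ and let $k \in \{1,\ldots,n\}$ be the unique index with $x \in g^{[k]_q}H$, so that $\phi_x = \varphi^k$.

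First I would test the central condition $x \circ t = t \circ x$ against elements $t \in H^*$. Since $g^{[n]_q}H = H$, every such $t$ satisfies $\phi_t = \varphi^n = \mathrm{id}$, and hence $t \circ x = t \cdot x$. On the other hand $x \circ t = x \cdot \phi_x(t)$, so cancelling $x$ in the field $\mathbb{F}_{q^n}$ yields $\phi_x(t) = t$ for every $t \in H$; in particular $\varphi^k(g^n) = g^n$.

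Next I would invoke Lemma \ref{l4444}, which asserts $\mathbb{F}_p\langle g^n\rangle = \mathbb{F}_{q^n}$. Because $\varphi^k$ is a ring automorphism of $\mathbb{F}_{q^n}$ that fixes the prime subfield $\mathbb{F}_p$ and fixes $g^n$, it must fix every element of the subring they generate, i.e.\ all of $\mathbb{F}_{q^n}$. Therefore $\varphi^k = \mathrm{id}$, which forces $n \mid k$ and so $k = n$. Consequently $x \in g^{[n]_q}H = H$ and $\phi_x = \mathrm{id}$.

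To conclude, I would test commutativity with $t = g \in g^{[1]_q}H$, for which $\phi_t = \varphi$. Then $t \circ x = g \cdot x^q$ while $x \circ t = x \cdot \phi_x(t) = x \cdot g$, and cancelling $g$ gives $x^q = x$, whence $x \in \mathbb{F}_q$. The main obstacle is the middle step of promoting $\phi_x$ being trivial on $H$ to $\phi_x = \mathrm{id}$ on all of $\mathbb{F}_{q^n}$; this is precisely where Lemma \ref{l4444} does the heavy lifting, so the proof goes through cleanly with no case analysis on $(q,n)$.
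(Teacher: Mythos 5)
Your proposal is correct and follows essentially the same route as the paper's own proof: test centrality against $t=g^n\in H$ to get $\phi_x(g^n)=g^n$, use Lemma \ref{l4444} together with the fact that $\phi_x$ fixes $\mathbb{F}_p$ to conclude $\phi_x=\mathrm{id}$, and then test against $t=g$ to obtain $x^q=x$. The only cosmetic difference is that you additionally record $k=n$ and $x\in H$, which the paper does not need to state explicitly.
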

\begin{proof}
Take $x \in   C \big (  R \big ) $. Then $ x \circ t =t \circ x$ for all $ t \in     R.$ Let $t =g^n \in H.$ Then 
\begin{align*}
t \circ x= t \cdot \phi_t(x)= g^n \cdot \phi_{g^n}(x)=g^n \cdot x \thickspace  \mbox{since}\thickspace  \phi_{g^n}= id.
\end{align*}
Also
\begin{align*}
x \circ t =x \cdot \phi_x(t)=x \cdot \phi_x(g^n).
\end{align*}
Since $x \circ t =t \circ x, \thickspace$  $g^n \cdot x =x \cdot \phi_x(g^n) $. Hence $\phi_x(g^n)=g^n.$ Furthermore, since $\mathbb{F}_p$ is fixed by $\psi$, the Frobenius map,   $\phi_x$ fixes $\mathbb{F}_p$. Therefore $\phi_x$ fixes $\mathbb{F}_p(g^n),$ the smallest subfield of $\mathbb{F}_{q^n}$ that contains $\mathbb{F}_p$ and $g^n.$ By Lemma \ref{l4444}, $ \phi _x$ fixes $\mathbb{F}_{q^n}$. Thus  $ \phi_x=id.$ Now take $t=g \in g^{[1]_q}H,$ So $ \phi_t= \phi_g= \varphi= \psi^l.$ Then 
\begin{align*}
t \circ x =g \circ x = g \cdot \phi_g(x)= g \cdot \varphi (x).  
\end{align*}
Also,
\begin{align*}
x\circ t = x \cdot\phi_x(t)=x \cdot t = x \cdot g.
\end{align*}
Thus $ t\circ x = x \circ t \Leftrightarrow g \cdot \varphi (x)= x \cdot g \Leftrightarrow \varphi(x)=x \Leftrightarrow x^q=x.$ So $x \in \mathbb{F}_q$. \\
\end{proof}

\vspace{10mm}

\section{Concluding comments}
By Theorem \ref{m1} and  Theorem \ref{m2}, we have shown that $C(R) = \mathbb{F}_q$ where $R \in DN(q,n)$. An intersting research line is to characterize all the automorphism of a finite Dickson nearfield.

\end{document}